\newtheorem{theorem}{Theorem}
 \newtheorem{proposition}{Proposition}
 \newtheorem{corollary}{Corollary}
 \newtheorem{problem}{Problem}
\newcommand{\ie}{{i.e.}}
\newcommand{\eg}{{e.g.}}
\newcommand{\cent}{{\rm Center}}
\newcommand{\conv}{{\rm conv}}
\newcommand{\Delete}{{\rm Delete}}
\newcommand{\Order}{{\rm Order}}
\newcommand{\AnyOrder}{{\rm AnyOrder}}
\newcommand{\diam}{{\rm diam}}
\newcommand{\RR}{\mathbb{R}}  
\newcommand{\Term}{Nearest Neighbor Graph}
\begin{document}
\title{Maximizing the Maximum Degree \\ in Ordered \Term s\thanks{An earlier version of this article
    appeared in Proceedings of CALDAM 2025.}} 

\author{P\'eter \'Agoston\thanks{Alfréd Rényi Institute of Mathematics, Budapest, Hungary\@.
    Email: \texttt{agostonp95@gmail.com}} \and
  \!\!\!\!Adrian Dumitrescu\thanks{Algoresearch L.L.C., Milwaukee, WI, USA,
  E-mail: \texttt{ad.dumitrescu@algoresearch.org}} \and
  \!\!\!\!Arsenii Sagdeev\thanks{ Karlsruhe Institute of Technology, Karlsruhe, Germany\@.
 Email: \texttt{sagdeevarsenii@gmail.com}} \and
  \!\!\!\!Karamjeet Singh\thanks{Indraprastha Institute of Information Technology, Delhi, India. 
Email: \texttt{karamjeets@iiitd.ac.in}} \and
  \!\!\!\!Ji Zeng\thanks{University of California San Diego, La Jolla, CA, USA and
    Alfréd Rényi Institute of Mathematics, Budapest, Hungary. Email: \texttt{jzeng@ucsd.edu}}
}

\maketitle

\begin{abstract}
For an ordered point set in a Euclidean space or, more generally, in an abstract metric space, the
\textit{ordered \Term} is obtained by connecting each  
of the points to its closest predecessor by a directed edge. We show that for every set of $n$ 
points in $\RR^{d}$, there exists an order such that 
the corresponding ordered \Term\ has maximum degree at least 
 $\log{n}/(4d)$. Apart from the $1/(4d)$ factor, this bound is the best possible. As for the
abstract setting, we show that for every $n$-element metric space, there exists an order such that
the corresponding ordered \Term\ has maximum degree $\Omega(\sqrt{\log{n}/\log\log{n}})$. 
\end{abstract}

\section{Introduction} \label{sec:intro}

For a given point set $P$ in the plane, in $d$-dimensional Euclidean space, or, even more generally,
in an arbitrary metric space, its \textit{\Term} is a directed graph based on minimum
distances. More precisely, these points are the vertices of the \Term, and each vertex has precisely
one outgoing edge towards its closest neighbor. To make this notion well-defined, we assume that our
point set is in \emph{general position}, which here means that no three points determine an
isosceles triangle. 

These graphs play an important role in modern Computational Geometry due to their relevance
in the context of computing geometric shortest paths~\cite{Mi00,MM17}.
Further applications of \Term s for computing spanners,
well-separated pairs, and approximate minimum spanning trees in $\RR^d$ can be found in the
survey~\cite{Sm00}, books~\cite{BCK+08,PS85}, or monograph~\cite{NS07}, see also~\cite{HJLM93}. A
systematic study of the basic combinatorial properties of \Term s dates back at least to the
classical paper~\cite{EPY97} by Eppstein, Paterson, and Yao in which, among many other results, they
made the following simple observation: two edges with the same endpoint meet at an angle of at least
$\pi/3$. Hence, the maximum indegree of the \Term\ of a point set $P \subset \RR^d$ is bounded from above
by the \textit{kissing number} of $\RR^d$, see~\cite{BDM+12}, \ie, the maximum number of non-overlapping unit spheres that can touch a unit sphere in $\RR^d$, regardless of the size of $P$. 

Here we extend the latter result and study the maximum indegree in a closely related class of
\textit{ordered \Term s} introduced in~\cite{AEM92,Epp92} in the context of dynamic algorithms. In
this case, the vertices appear one by one, and each new vertex has precisely one outgoing edge
towards its closest predecessor, see Figure~\ref{fig:one}. 

\begin{figure}[hbtp]
\centering
\includegraphics[scale=1.1]{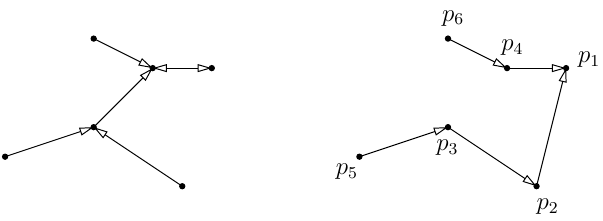}
\caption{Unordered (left) and ordered (right) \Term s on the same set of six points.}
\label{fig:one}
\end{figure}

It is not hard to see that every point set admits an order such that the corresponding ordered
\Term\ is a path, and thus its maximum indegree is $1$ (Proposition~\ref{thm:min_max-degree}). We
therefore focus on a `dual' problem, where the goal is to construct an order that \textit{maximizes}
the maximum indegree. We have the following results, partially addressing this question in three
different settings. 

For $n$ points on the line, the maximum indegree can be made $\lceil \log n \rceil$, which is optimal.
(Unless specified otherwise, all logarithms in this paper are in base~$2$.) 

\begin{theorem} \label{thm:line}
	For every set of $n$ points on the line, there exists an order such that the corresponding
        ordered \Term\ has maximum indegree at least $\lceil\log{n}\rceil$.  On the other hand,
        there is a set of $n$ points on the line such that for every order, the indegree of each
        point is at most $\lceil\log{n}\rceil$.  
\end{theorem}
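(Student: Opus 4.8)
The plan is to recast the whole problem in terms of ``halving chains'' on the line. Fix a set $P=\{x_1<\dots<x_n\}$ in general position (all pairwise distances distinct). For $p\in P$ call a sequence of points $p<r_m<r_{m-1}<\dots<r_1$ to the right of $p$ a \emph{right halving chain at $p$} if $r_i-p<(r_{i-1}-p)/2$ for every $i\ge 2$; let $R_P(p)$ be the maximum length of such a chain, and define $L_P(p)$ symmetrically using points to the left of $p$. First I would prove that, over all insertion orders, the largest indegree attainable at a vertex $p$ is exactly $L_P(p)+R_P(p)$, so the quantity to bound is $\Phi(P):=\max_{p\in P}\bigl(L_P(p)+R_P(p)\bigr)$. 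For ``$\le$'': in any order, list the in-neighbours of $p$ that lie to its right by time of insertion as $q_1,q_2,\dots$; when $q_i$ is inserted $p$ is its closest predecessor, so no already-inserted point lies strictly between $p$ and $q_i$ (which forces $q_i<q_{i-1}$) and $p$ beats $q_{i-1}$ for $q_i$ (which forces $q_i-p<(q_{i-1}-p)/2$); hence these points form a right halving chain, and likewise on the left. For ``$\ge$'': given optimal left and right chains at $p$, insert $p$ first, then the right chain from far to near interleaved with the left chain from far to near, then all remaining points; one checks directly that $p$ is the closest predecessor each time a chain point is inserted (a same-side earlier point loses because the chain halves; an opposite-side point loses automatically).

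For the upper bound I would build the extremal set recursively. Let $A_1=\{0,1\}$, and given $A_{k-1}\subseteq[0,1]$ let $A_k$ consist of a copy of $A_{k-1}$ scaled by a tiny factor $\eta_k$ sitting in $[0,\eta_k]$, together with a second copy scaled by a tiny factor $\eta_k'$ sitting in $[1-\eta_k',1]$; thus $|A_k|=2^k$. The point is that each sub-copy has diameter at most $\max(\eta_k,\eta_k')$ while the two sub-copies are at distance $\approx 1$ apart, so from any $p$ in one sub-copy the entire other sub-copy lies within a factor $2$ in distance from $p$ and therefore contributes at most one link to any halving chain through $p$. This gives $\Phi(A_k)\le \Phi(A_{k-1})+1$, and since $\Phi(A_1)=1$ we get $\Phi(A_k)\le k=\log|A_k|$. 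Choosing the $\eta_k,\eta_k'$ generically keeps all pairwise distances distinct, and for arbitrary $n$ one takes any $n$-point subset of $A_{\lceil\log n\rceil}$, which can only decrease every $L_P(p)$ and $R_P(p)$; hence the maximum indegree is at most $\lceil\log n\rceil$.

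For the lower bound I would prove $\Phi(P)\ge\lceil\log n\rceil$ by strong induction on $n$, using two ingredients. The first is monotonicity: $L_P(p)$ and $R_P(p)$ never decrease when points are added, so $\Phi(P)\ge\Phi(P\setminus\{x\})\ge\lceil\log(n-1)\rceil$ for any $x$, which already settles every $n$ except $n=2^j+1$. The second is a split-and-extend step: let $g$ be the largest gap, dividing $P$ into $L\uplus R$ with, say, $|L|\ge n/2$ after a reflection; apply induction to $L$ to get $p\in L$ with $L_L(p)+R_L(p)\ge\lceil\log|L|\rceil=\lceil\log n\rceil-1$, and prepend to $p$'s right halving chain one point of $R$ (the farthest point $x_n$), which is legitimate provided $x_n-p$ exceeds twice the far-end distance of that chain. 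Here the basic fact that the two far ends of $p$'s chains together span at most $\diam(L)$ means one of the two chains is ``short,'' so the extension is available in the good case. \emph{The main obstacle is making the extension step always go through.} When $L$'s only near-optimal points have long chains on the side facing the gap, prepending across the largest gap can fail; the fix is to be cleverer about which side to recurse into and where to split — e.g.\ split at the leftmost gap that dominates the diameter of the part to its right, recurse on that part, and extend the other way — and most likely to carry down a strengthened inductive statement recording how far the winning chain is allowed to reach. It is exactly this bookkeeping, needed to land on $\lceil\log n\rceil$ at the values $n=2^j+1$ rather than merely $\log n-O(\log\log n)$, that is the delicate part; everything else is routine.
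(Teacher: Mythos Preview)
Your halving-chain characterization $d(p)=L_P(p)+R_P(p)$ is correct and elegant, and your upper-bound construction is essentially the paper's (the paper uses translates by $3^k$ instead of tiny scalings, but the analysis is the same). The lower bound, however, has a genuine gap that you yourself flag: splitting at the largest gap and then trying to \emph{prepend} a far point to the inductively obtained chain need not satisfy the halving inequality, and the patches you sketch (a different gap to split at, a strengthened invariant about ``how far the winning chain is allowed to reach'') are not carried out. As stated, your argument does not prove $\Phi(P)\ge\lceil\log n\rceil$.

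The paper's fix is much simpler than anything you propose and sidesteps the obstacle entirely. Split not at the largest gap but at the \emph{midpoint of the diameter}: if $a,b$ are the leftmost and rightmost points, let $A$ be the set of points closer to $a$ and assume $|A|\ge\lceil n/2\rceil$. Then automatically $\diam(A)<|ab|/2<\dist(b,A)$. Recurse on $A$ to obtain a point $c=\cent(A)$ and an order on $A$ (with $c$ listed first) under which $c$ has indegree at least $\lceil\log|A|\rceil$; now order $P$ as
\[
c,\; b,\; \text{(rest of $A$ in the recursive order)},\; \text{(rest of $B$ arbitrarily)}.
\]
The key is \emph{non-interference}, not chain extension: since every point of $A$ is closer to every other point of $A$ than to $b$, the presence of $b$ does not change a single edge created during the processing of $A$, so $c$ still receives its $\lceil\log|A|\rceil$ in-edges from within $A$, plus the extra edge $b\to c$. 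This gives indegree $\ge\lceil\log|A|\rceil+1\ge\lceil\log n\rceil$ with no bookkeeping whatsoever about where the chains end. In your language, the far points $b$ produced at successive recursion levels \emph{do} assemble into valid left and right halving chains at $c$ --- that is forced a posteriori by your own ``$\le$'' argument --- but the paper never has to verify the halving inequality directly, which is precisely the step that was blocking you.
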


We obtain a similar maximum indegree (with a loss factor $1/(4d)$) for $n$ points in $\RR^d$,
and in view of Theorem~\ref{thm:line}, this bound is best possible apart from the loss factor. 

\begin{theorem} \label{thm:d-space}
	For every set of $n$ points in $\RR^d$, there exists an order such that the corresponding
        ordered \Term\ has maximum indegree at least $\log{n}/(4d)$.  
\end{theorem}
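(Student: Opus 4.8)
The plan is to reduce the $d$-dimensional problem to the one-dimensional case (Theorem~\ref{thm:line}) by a pigeonhole argument on directions. Given $n$ points in $\RR^d$, I would first fix a finite set $D$ of $\approx d$ ``reference directions'' (say, the coordinate axes $e_1,\dots,e_d$, or a slightly larger but still $O(d)$-sized set chosen so that every unit vector makes an angle less than $\pi/4$ — equivalently, has correlation more than $1/\sqrt2$ — with one of them or its negative). The point of such a set is the standard fact that for any two points $p,q$, the segment $pq$ is ``nearly parallel'' to some direction in $D$: there is a direction $u \in D$ with $|\langle p-q, u\rangle| > \|p-q\|/\sqrt2$. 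This is exactly the kind of covering of the sphere $S^{d-1}$ by $2d$ caps of angular radius $\pi/4$ that underlies the usual Yao-graph constructions.

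Next I would like to find a single direction $u \in D$ and a large subset $P' \subseteq P$, with $|P'| \ge n^{1/(2d)}$ or so, such that $u$ is the ``dominant'' direction for \emph{all} pairs inside $P'$ — or at least such that projecting $P'$ onto the line spanned by $u$ approximately preserves distances up to a factor $\sqrt2$. The natural way is a recursive/iterated pigeonhole: partition $P$ according to which direction dominates the pair $(p, \text{its nearest neighbor})$, or more robustly, build a hierarchical clustering where at each of $\le 2d$ scales we split along the $d$ coordinates. Concretely: sort $P$ by the first coordinate and split into two halves of equal size; recurse; after $\log_2(\text{something})$ levels we localize to a box small in the first coordinate, then repeat with coordinates $2,\dots,d$. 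This yields a subset $P'$ of size $\ge n^{1/(2d)}$ lying in a box whose diameter in each coordinate direction is controlled, so that one coordinate — say the $k$-th — satisfies: the $k$-th coordinates of the points of $P'$ span a range at least $1/\sqrt d$ times the diameter of $P'$, while also the order of points by $k$-th coordinate determines nearest-predecessor structure faithfully enough.

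Then I would apply the line construction of Theorem~\ref{thm:line} to the $k$-th coordinates of $P'$: there is an ordering of $P'$ whose one-dimensional ordered Yao graph has a vertex of indegree $\ge \lceil \log |P'|\rceil \ge \frac{1}{2d}\log n$. The key claim to verify is that this ordering, viewed back in $\RR^d$, still gives that vertex indegree $\Omega(\log n / d)$: an edge $q \to v$ present in the $1$-D ordered Yao graph on the $k$-th coordinates should remain present (or at least a constant fraction of such edges should remain) in the $\RR^d$ ordered Yao graph, because $v$'s true nearest predecessor in $\RR^d$ is at distance at most $\sqrt d$ times its distance along coordinate~$k$, and by the geometry of $P'$ this forces the predecessor to still be ``close'' — one loses at most a further constant (absorbable into the $1/(4d)$, hence the $4$ rather than $2$). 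Getting these constants to line up, i.e.\ ensuring that the factor lost in passing from $1$-D back to $\RR^d$ combined with the factor $1/(2d)$ lost in the pigeonhole stays within $1/(4d)$, is the delicate bookkeeping; but the only genuine obstacle is the middle step — guaranteeing that the large subset $P'$ has a coordinate direction along which the $1$-D Yao structure transfers back to $\RR^d$ with only constant loss. Everything else is pigeonhole and the already-proved line bound.
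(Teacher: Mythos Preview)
Your approach has a genuine gap that you yourself flag as ``the only genuine obstacle'': the transfer of the one-dimensional ordered Yao structure back to $\RR^d$ does not go through. Suppose you do find $P'$ lying in a thin slab, say with first coordinate spanning $[0,1]$ and all other coordinates spanning $[0,\eps]$. The one-dimensional algorithm of Theorem~\ref{thm:line} applied to the first coordinates produces a vertex $v$ receiving edges from ``far points'' $b_1,b_2,\ldots$ at first-coordinate scales roughly $1,\tfrac12,\tfrac14,\ldots$; but once this recursion reaches a scale comparable to $\eps\sqrt{d}$, the inequality $\diam(A_i)<\dist(b_i,A_i)$ that drives the line argument --- true for the projections --- can fail in $\RR^d$, since the transverse coordinates add up to $\eps\sqrt{d-1}$ to distances inside $A_i$ while contributing nothing guaranteed to $\dist(b_i,A_i)$. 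So you recover not $\log|P'|$ edges but only about $\log(\text{aspect ratio of the slab})$, a quantity you have no control over; it can be $O(1)$ even when $|P'|$ is large. Moreover, your hierarchical splitting by medians controls cardinalities, not coordinate ranges, so it does not actually produce a thin slab; splitting by midpoints would control ranges but not cardinalities. Neither variant delivers the subset you need, and the claimed size $n^{1/(2d)}$ does not match the construction you describe.

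The paper avoids projection entirely and recurses directly in $\RR^d$. Given $P$ with a diameter pair $a,b$ (say $|ab|=1$), it takes the half $A$ of $P$ nearer to $a$ and applies a Rogers--Zong covering bound to partition $A$ into at most $16^d/2$ pieces each of diameter strictly less than $\tfrac12$; the largest piece $C$ has $|C|\ge n/16^d$. The order is then $\cent(C),\,b,\,\Delete(\Order(C),\cent(C)),\,\AnyOrder(\text{rest})$. Because $\diam(C)<\tfrac12\le\dist(b,C)$ holds in the ambient Euclidean metric --- not merely in a projection --- the presence of $b$ cannot disturb the recursive processing of $C$, giving $\Delta^-(P)\ge\Delta^-(C)+1$ and hence $\Delta^-(P)\ge\log_{16^d}n=\log n/(4d)$. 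The essential idea you are missing is to obtain the small-diameter subset by a covering argument rather than by projecting onto a line; this makes the key diameter--distance comparison exact at every level of the recursion.
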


The maximum indegree we manage to obtain in a metric space on $n$ points is only about 
$\sqrt{\log{n}}$, and closing the gap between this lower bound and the $\lceil\log{n}\rceil$ upper
bound remains as a challenging open problem. 

\begin{theorem} \label{thm:metric2}
For every $n$-element metric space, there exists an order such that the corresponding ordered
\Term\ has maximum indegree $\Omega\big(\sqrt{\log(n)/\log\log(n)}\big)$.  
\end{theorem}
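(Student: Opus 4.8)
Say that a point $v$ together with an ordered tuple $(t_1,\dots,t_k)$ of distinct other points is a \emph{$k$-fan at $v$} if $d(v,t_j)<d(t_i,t_j)$ whenever $1\le i<j\le k$. A $k$-fan forces indegree at least $k$: order the points as $v,t_1,\dots,t_k$ followed by all the rest; when $t_j$ is inserted its predecessors are precisely $v,t_1,\dots,t_{j-1}$, and the fan inequalities make $v$ the nearest of them, so $t_j$ sends its arrow to $v$. Hence it suffices to show that every $n$-point metric space contains a $k$-fan with $k=\Omega\big(\sqrt{\log n/\log\log n}\big)$. Equivalently --- sorting the tips of a fan by decreasing distance to $v$ --- it suffices that, for some $v$, the graph $F_v$ on $X\setminus\{v\}$ with edge set $\big\{\{t,t'\}:d(t,t')<\min(d(v,t),d(v,t'))\big\}$ (the pairs $\{t,t'\}$ for which $\{t,t'\}$ is the shortest edge of the triangle $\{v,t,t'\}$) has an independent set of that size.

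The plan is to build the fan one vertex at a time while restricting attention to a shrinking but still large ``live'' set $U\subseteq X$, losing only a controlled multiplicative factor in $|U|$ per round, so that $\Omega\big(\sqrt{\log n/\log\log n}\big)$ rounds are possible. Two ingredients drive a round. First, a \emph{scale} ingredient: if the current live set contains a point $v$ with $k$ distances $d_1<\dots<d_k$ to other live points obeying $d_{i+1}\ge 3d_i$, then a witness for each scale, ordered from farthest to nearest to $v$, already gives a $k$-fan, since $d(t_i,t_j)\ge d(v,t_i)-d(v,t_j)\ge 2d(v,t_j)>d(v,t_j)$ when $t_i$ is the more distant tip --- this is the one-dimensional mechanism from Theorem~\ref{thm:line}. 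Otherwise every point of $U$ sees fewer than $k$ multiplicative scales, so pigeonhole puts a $1/k$-fraction of $U$ into a single ``shell'' $\{u:d(v,u)\in[r,3r)\}$ around some $v$; one then commits $v$ (or a related point) as the next fan vertex and recurses inside the shell, after discarding points too close together to be fan tips. Second, a \emph{Ramsey} ingredient for the residual, essentially scale-free case: up to a perturbation of the distances preserving general position, only the $\prec$-order of the edge lengths matters, so one is left with a purely combinatorial problem --- for an arbitrary linear order on $E(K_m)$, find $v$ with $F_v$ having a large independent set --- which I would handle by thresholding the lengths and locating a monochromatic clique ``dominated'' by an outside vertex.

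The main obstacle is the accounting of this last, almost-equilateral regime. A constant loss per round would give $k=\Theta(\log m)$, more than enough; what one can actually guarantee is a loss of order $i^{O(i)}$ once the partial fan has length $i$ --- governed by the relevant (ordered) Ramsey number --- and the accumulated loss $\prod_{i\le k}i^{O(i)}\approx\exp\big(O(k^2\log k)\big)$ over the $\sim k$ rounds is exactly what forces $n\gtrsim\exp\big(O(k^2\log k)\big)$, i.e.\ $k=\Omega\big(\sqrt{\log n/\log\log n}\big)$. Here the naive counting identity $\sum_{v}|E(F_v)|=\binom{m}{3}$ is far too weak --- it only yields an independent set of size $3$ --- so one must use that all the graphs $F_v$ come from a single linear order; making a Ramsey/Dilworth-type argument deliver the required fan with a small enough per-round loss is the heart of the matter. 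Two side issues to verify are that the ``few scales'' alternative genuinely produces a heavy shell that can be recursed into, and that the passage from a bounded-spread metric to a pure edge-order is harmless.
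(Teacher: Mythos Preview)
Your reduction to fans is correct, and the observation that a $k$-fan at $v$ is the same as a $k$-element independent set in $F_v$ (after sorting tips by decreasing distance to $v$) is a clean reformulation --- it is essentially equivalent to the paper's $3$-coloring of triples by ``shortest side''. But what you have written is a plan, not a proof: you yourself call the Ramsey step ``the heart of the matter'' and then do not carry it out (``which I would handle by thresholding\ldots''), and you close by listing ``two side issues to verify'' that remain unverified. Everything up to that point is soft; the entire content of the theorem lives in the Ramsey bound, and you have not supplied it.

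The scale/shell dichotomy is also a detour, and your iterative scheme is structurally unclear. A fan has a single fixed apex $v$, yet in each round you ``commit $v$ (or a related point) as the next fan vertex'' without saying whether this is the apex or a tip, nor why the new commitment is compatible with the earlier ones. Even if this were repaired, the scale alternative only short-circuits to an immediate $k$-fan when scales are abundant; in the residual bounded-spread case you are back to needing the full Ramsey argument anyway, so the whole layer buys nothing.

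For comparison, the paper proceeds directly: $3$-color each triple $\{i_1<i_2<i_3\}$ by which side of $v_{i_1}v_{i_2}v_{i_3}$ is shortest, and observe (a one-line check in each case) that a red clique, a green forward star, or a blue forward star of size $k$ yields an order with indegree $k-1$. The Ramsey statement needed is then: a $3$-colored $K_n^{(3)}$ with no red $K_k^{(3)}$ and no green or blue forward star $S_k^{(3)}$ has $n\le\exp(O(k^2\log k))$. This is a theorem of He and Fox, and the paper reproduces their proof --- an iterative vertex-picking process that builds an auxiliary colored graph while shrinking a ``waiting'' set, with explicit bookkeeping of how many waiting vertices each created edge deletes. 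Your target $\prod_{i\le k} i^{O(i)}\approx\exp(O(k^2\log k))$ is the right order of magnitude, but the mechanism that produces it is exactly this He--Fox process, and that is what your write-up is missing.
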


\paragraph{Related work.}
For a point set in the plane, the notion of (ordered or unordered) \Term\ admits the following
generalization. The space around each point is divided into $k$ cones of equal angle similarly
positioned around each point, and each point is connected to a `nearest' neighbor in each
cone. Depending on the definition of `nearest' (either in a sense of the Euclidean distance or in
terms of the projection on the cones' bisectors), the resulting graph is called either a
\textit{Yao} or a \textit{Theta Graph}. These notions were introduced by Yao~\cite{Yao82} and
Clarkson~\cite{Cla87}, respectively, and their ordered variants are due to Bose, Gudmundsson, and
Morin~\cite{BGM04}. These graphs are useful in constructing spanners with nice additional
properties, such as logarithmic maximum degree and logarithmic diameter that are obtained for
suitable insertion orders, see also~\cite{Epp00}. Sparse graphs with a small dilation have been
studied in~\cite{ABC+08,BGM04,BGS05,BMN+04}, among others. 

Note that the special case $k=1$ retrieves the original definition of the (ordered) \Term. However,
for larger $k$, it is an open problem in the field to determine whether every point set $P$ admits
an order such that the maximum indegree of the corresponding ordered Yao or Theta Graph is bounded
from above by some constant $c_k$, independent of the size of $P$, see~\cite{BGM04}. To get a better
understanding of how degrees behave in these graphs, here we attempt to \textit{maximize} the
maximum indegree. The general case $k \geq 2$ will be addressed in a subsequent paper. 

In contrast, constructing an order \emph{minimizing} the maximum indegree of the ordered \Term\ is
rather straightforward. 

\begin{proposition} \label{thm:min_max-degree}
	For every point set $P$, there is an order such that the corresponding ordered \Term\ is a
        path. Moreover, any point in $ P$ can be chosen to be the tail of this path.  
\end{proposition}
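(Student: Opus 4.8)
The plan is to prove a slightly stronger, fully directed version by induction on $n=|P|$: for every $n$-point set $P$ in general position and every designated point $v\in P$, there is an ordering $p_1,\dots,p_n$ with $p_n=v$ whose ordered Yao graph is the directed path $p_n\to p_{n-1}\to\cdots\to p_1$, i.e.\ such that the closest predecessor of $p_i$ is exactly $p_{i-1}$ for every $i\ge 2$. This immediately gives both assertions: the graph is a path, and $v=p_n$ (the unique vertex with in-degree $0$) is its tail.

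The ordering is built greedily from the back. First I fix $p_n:=v$. Because $p_n$ is last in the order, its closest predecessor is forced to be its unique nearest neighbor $u$ in $P\setminus\{v\}$ (uniqueness coming from general position), so I am compelled to put $p_{n-1}:=u$. The point is that this is precisely the hypothesis needed to recurse: I apply the inductive statement to the $(n-1)$-point set $P\setminus\{v\}$ with designated point $u$, obtaining an ordering $p_1,\dots,p_{n-1}$ of $P\setminus\{v\}$ with $p_{n-1}=u$ whose ordered Yao graph is $p_{n-1}\to\cdots\to p_1$. Appending $p_n=v$ changes the set of predecessors of no earlier point, so the closest predecessor of $p_i$ is still $p_{i-1}$ for $2\le i\le n-1$; and the closest predecessor of $p_n=v$ among $\{p_1,\dots,p_{n-1}\}=P\setminus\{v\}$ is $u=p_{n-1}$ by the choice of $u$. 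Hence the ordered Yao graph of $p_1,\dots,p_n$ is exactly the path $p_n\to\cdots\to p_1$. The base case $n=1$ is trivial (a single isolated vertex, which is its own tail).

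I do not anticipate a genuine obstacle; the only thing to get right is the bookkeeping of the induction, namely that the vertex forced into slot $p_{n-1}$ is precisely the one we must hand to the recursive call as its prescribed endpoint, so that the single forced edge out of the last vertex attaches to the current endpoint of the already-built path rather than to an interior vertex. Note also that the argument uses nothing about $\RR^d$ beyond the existence of a well-defined unique nearest neighbor of a point within any subset, so it works verbatim in an arbitrary metric space in general position.
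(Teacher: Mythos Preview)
Your proof is correct and takes essentially the same approach as the paper: both fix $p_n=v$ arbitrarily and then greedily set $p_{n-i}$ to be the nearest point to $p_{n-i+1}$ among the remaining points, which forces the ordered Yao graph to be the path $p_n\to p_{n-1}\to\cdots\to p_1$. The only cosmetic difference is that you phrase the construction as an induction on $n$ while the paper writes it as an explicit iterative loop.
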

\begin{proof}
Write $n = |P|$ and fix an arbitrary point $p_n \in P$. For $1\leq i\leq n-1$, let $p_{n-i}$ be the
point closest to $p_{n-i+1}$ among the points $P\setminus \{p_n,p_{n-1},\dots, p_{n-i+1}\}$.  
Now consider the order $\pi=(p_1,p_2,\dots,p_n)$. It is easy to check that the ordered \Term\ $G$
corresponding to $\pi$ is just the path
$p_n \rightarrow p_{n-1}  \rightarrow \ldots  \rightarrow p_1$. Indeed,
at the moment when $p_i$ is introduced, we connect $p_i$ to the point among  
 $\{p_1,p_2,\dots,p_{i-1}\}$ that is closest to $p_i$, which is exactly $p_{i-1}$ according to our
construction of $\pi$. 
\end{proof}

\paragraph{Notation.}
For a finite point set $P \subset \RR^d$, we use $\diam(P)$ to denote its diameter, \ie, the maximum
distance between any pair of points in $P$, and $\conv(P)$ to denote its convex hull.

\section{Points on the line: proof of Theorem~\ref{thm:line}} \label{sec:line}
	
\emph{Upper bound.} We construct the desired set of points on the line
recursively. For $n=2^1$, we put $P=P_1 = \{0,1\}$. For $k\ge 1$ and
$n=2^{k+1}$, we put $P=P_{k+1} = P_k \cup (3^k+P_k)$. 
It is straightforward to verify by induction that $\diam(P_k) = (3^k-1)/2$,
which is smaller than the distance between the two `halves' of $P_{k+1}$,
namely, $P_k$ and $3^k+P_k$.
Indeed, $\diam(P_k) = (3^k-1)/2 <  (3^k+1)/2$.
Therefore, in every order of $P_{k+1}$, each point can
have at most one edge incoming from the other `half'. Hence by induction, the
indegree of each point does not exceed $k+1 =\log{n}$. Finally, if
$2^k < n \le 2^{k+1}$, then an arbitrary subset $P \subseteq P_{k+1}$ of
size $n$ inherits the latter property.

\smallskip \noindent
\emph{Lower bound.} 
Choose $k$ such that $\lceil\log{n}\rceil = k+1$, \ie, $2^k<n \le 2^{k+1}$.
The proof is by induction on $k$.
Note that the statement is trivial for $k=0$, so let us now assume that $k \ge 1$.
We identify a suitable subset $P' \subseteq P$ such that its reveal already
forces the indegree of one of the points to reach the required threshold.
The remaining points in $P \setminus P'$ are introduced afterwards and clearly do not
decrease any degree. 
We may assume that $P$ lies on the $x$-axis. Let $X \subseteq P$.
We outline a recursive procedure $\Order$ for generating a suitable order
that employs the following subroutines and a distinguished element of $P$:

\begin{itemize} \itemsep 2pt
\item $\cent(X)$: refers to a specified element of $X$ that has a high indegree
  (to be determined). If $X=\{x\}$, then $\cent(X)=x$.
\item $\Order(X)$: lists the elements of $X$ in a suitable order that ensures 
  the existence of $\cent(X)$. Moreover,  $\cent(X)$ appears first in this order.
\item $\AnyOrder(X)$: lists the elements of $X$ in an arbitrary order;
 for instance from left to right.
\item $\Delete(L,x)$: deletes $x$ from list $L$ and returns the resulting list.
\end{itemize}

\newpage
\noindent {\bf Algorithm~$\Order(P)$}
\begin{enumerate} \itemsep 2pt

\item [] Step 1:
Let $a$ and $b$ denote the leftmost and rightmost points of $P$.

\item [] Step 2:
Partition $P= A \cup B$ around the midpoint of $ab$: let $A$
be the set of points of $P$
that are closer to $a$ than to $b$ (including $a$ itself), while $B$
is the set of remaining points;
assume without loss of generality that 
$|A| \geq |B|$; if $|A| < |B|$ proceed analogously by switching $A$ with $B$ and
$a$ with $b$. 

\item [] Step 3:
List $P$ as follows:
$\cent(A)$, $b$, $\Delete(\Order(A),\cent(A))$, \\$\AnyOrder(B \setminus\{b\})$.

\item [] Step 4:
$\cent(P) \gets \cent(A)$.

\end{enumerate}

\paragraph{Analysis.} Let $\Delta^{-}(P)$ denote the indegree of $\cent(P)$.
Note that $\Order(A)$ is a list obtained by a recursive call and that the distance 
from $b$ to $A$ is strictly larger than the diameter of $A$ by construction.
Therefore, the indegree of $\cent(A)$ is increased by $1$
due to the directed edge $b \to \cent(A)$ that is added when $b$ is revealed to the
algorithm as the second point. As a result,
\[ \Delta^{-}(P) \geq \Delta^{-}(A) +1 \ge k+1,\]
where the last inequality holds by induction hypothesis since $2^{k-1} < |A|$. 
\qed

\section{Points in $\RR^d$: proof of Theorem~\ref{thm:d-space}} \label{sec:d}

Here we further develop the ideas from the previous subsection to translate them from the line 
to $\RR^d$, for any fixed $d$. As the main supplementary tool, we use the following standard result
in discrete geometry, that can be deduced, \eg, from \cite[Ineq. (4) and (6)]{RZ97}, see also~\cite{BR25} 

\begin{theorem} \label{thm:RZ}
Let $K$ be a convex set in $\RR^d$, $K_0$ be its interior, and $r<1$ be a positive number.
Then $K$ can be covered by at most $(1+1/r)^d(d\ln d+d\ln \ln d + 5d)$ translates of $-rK_0$,
reflected scaled copies of $K_0$. 
\end{theorem}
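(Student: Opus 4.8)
The plan is to derive Theorem~\ref{thm:RZ} by combining two classical covering results, presumably the inequalities (4) and (6) quoted from~\cite{RZ97}. The first is the \emph{Rogers--Zong inequality}: for a convex body $C$ and a bounded set $D$ in $\RR^d$, the smallest number $N(D,C)$ of translates of $C$ whose union covers $D$ satisfies $N(D,C)\le \theta(C)\cdot \mathrm{vol}\bigl(D+(-C)\bigr)/\mathrm{vol}(C)$, where $\theta(C)$ denotes the translative covering density of $C$. The second is \emph{Rogers' bound} $\theta(C)\le d\ln d+d\ln\ln d+5d$ for every convex body $C\subseteq\RR^d$ with $d\ge 2$. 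Since $N(D,C)$ is unchanged under translating $D$, I would first assume that the origin lies in the interior $K_0$.

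Next I would carry out the volume bookkeeping. Apply the Rogers--Zong inequality with $D=K$ and $C=-rK$, a genuine convex body whose interior is exactly $-rK_0$. Then $D+(-C)=K+rK$, and by convexity this Minkowski sum equals $(1+r)K$: the inclusion $(1+r)K\subseteq K+rK$ is immediate, while conversely $x+ry=(1+r)\bigl(\tfrac{1}{1+r}x+\tfrac{r}{1+r}y\bigr)\in(1+r)K$ for all $x,y\in K$. Hence the volume ratio is $\mathrm{vol}\bigl((1+r)K\bigr)/\mathrm{vol}(-rK)=(1+r)^d/r^d=(1+1/r)^d$. Since covering density is invariant under non-degenerate affine transformations — in particular under the reflection-and-scaling carrying $K$ to $-rK$ — we get $\theta(-rK)=\theta(K)\le d\ln d+d\ln\ln d+5d$. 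Multiplying the two factors shows that $K$ is covered by at most $(1+1/r)^d(d\ln d+d\ln\ln d+5d)$ translates of the \emph{closed} body $-rK$.

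Finally I would pass from the closed copies $-rK$ to the open copies $-rK_0$ demanded by the statement. For this I would invoke the elementary fact that a convex body $L$ with $0\in L_0$ satisfies $\lambda L\subseteq L_0$ for every $\lambda\in(0,1)$ — thicken $\lambda y=\lambda y+(1-\lambda)\cdot 0$ by a small origin-centered ball contained in $L$. With $L=rK$ this yields $r'K\subseteq rK_0$ whenever $0<r'<r$, so any cover of $K$ by $m$ translates of $-r'K$ is also a cover of $K$ by $m$ translates of $-rK_0$. Running the previous paragraph with $r'$ in place of $r$ bounds $m$ by $(1+1/r')^d(d\ln d+d\ln\ln d+5d)$, and letting $r'\uparrow r$ — using that the left side is a fixed integer while the bound decreases to $(1+1/r)^d(d\ln d+d\ln\ln d+5d)$ — gives the claimed estimate. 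The step I expect to require the most care is precisely this closed-to-open passage, together with the bookkeeping that keeps the constants exactly as stated rather than picking up spurious ceiling terms; beyond that, the proof is an assembly of standard facts and routine Minkowski arithmetic. (The hypothesis $r<1$ singles out the regime of interest but is not needed for the covering bound itself.)
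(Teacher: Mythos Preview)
The paper does not actually prove Theorem~\ref{thm:RZ}; it merely states the bound and points to inequalities (4) and (6) of~\cite{RZ97} as its source. Your proposal fills in exactly the derivation the citation suggests --- Rogers--Zong's covering inequality $N(D,C)\le \theta(C)\,\mathrm{vol}(D-C)/\mathrm{vol}(C)$ combined with Rogers' density bound $\theta(C)\le d\ln d+d\ln\ln d+5d$ --- and the Minkowski arithmetic $K+rK=(1+r)K$ and the affine invariance of $\theta$ are handled correctly.

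One small wrinkle in your closed-to-open passage: you write ``the left side is a fixed integer while the bound decreases,'' but the $m$ you obtain from the cover by $-r'K$ may vary with $r'$. The clean way to phrase it is to let $N_0$ denote the \emph{minimum} number of translates of $-rK_0$ covering $K$; then $N_0\le N(K,-r'K)\le (1+1/r')^d(d\ln d+d\ln\ln d+5d)$ for every $r'<r$, and since $N_0$ is a single integer independent of $r'$, taking the infimum over $r'$ gives $N_0\le (1+1/r)^d(d\ln d+d\ln\ln d+5d)$. With that adjustment the argument is complete and matches what the paper is invoking.
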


\begin{corollary} \label{cor:borsuk}
  Let $P$ be a finite set of points in $\RR^d$ such that $\diam(P) \leq 1$.
  Then $P$ can be partitioned into at most $16^d/2$ subsets of diameter less than $1/2$.
\end{corollary}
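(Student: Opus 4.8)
The plan is to apply Theorem~\ref{thm:RZ} with a carefully chosen convex set $K$ and radius $r$. Since $\diam(P)\le 1$, the set $P$ is contained in its convex hull $K=\conv(P)$, which is a convex set of diameter at most $1$. I would like to cover $K$ by translated shrunken copies $-rK_0$ whose diameter is strictly less than $1/2$; since $\diam(-rK_0)=r\cdot\diam(K_0)\le r$, taking any $r$ with $r<1/2$ guarantees each piece of the cover has diameter strictly less than $1/2$. Intersecting each of these covering pieces with $P$ yields a partition of $P$ (after discarding overlaps, assigning each point of $P$ to one piece containing it) into sets of diameter less than $1/2$.

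The next step is to optimize the constant. With $r=1/2$ we would get diameter exactly $1/2$, which is not quite "less than"; instead I would pick $r$ slightly below $1/2$, say $r = 1/2 - \delta$ for small $\delta>0$, or simply observe that since $P$ is finite we may take $r<1/2$ close enough to $1/2$ that $(1+1/r)^d$ is as close to $3^d$ as needed. Theorem~\ref{thm:RZ} then gives a cover by at most $(1+1/r)^d(d\ln d + d\ln\ln d + 5d)$ pieces. So the count of parts is at most roughly $3^d \cdot (d\ln d + d\ln\ln d + 5d)$, and the task reduces to checking that this is bounded above by $16^d/2 = 3^d \cdot (16/3)^d / 2$.

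The only real work is this final numerical inequality: one must verify $d\ln d + d\ln\ln d + 5d \le \tfrac{1}{2}(16/3)^d$ for all $d\ge 1$ (with the small slack from $r<1/2$ absorbed, e.g. by using $3.01^d$ in place of $3^d$, still comfortably below $(16/3)^d$ since $16/3 > 5.3$). Since the left side grows like $d\ln d$ while $(16/3)^d$ grows exponentially, this holds for all large $d$, and one checks the finitely many small cases $d=1,2,3,\dots$ directly; these are routine and leave substantial room to spare, which is exactly why the statement uses the crude constant $16^d/2$ rather than something tight. I expect this bookkeeping to be the main (though entirely elementary) obstacle — the geometric content is a direct invocation of Theorem~\ref{thm:RZ}.
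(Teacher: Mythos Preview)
Your approach is essentially the paper's: apply Theorem~\ref{thm:RZ} to $K=\conv(P)$ and then check that the resulting count is below $16^d/2$. The one point of divergence is your choice of $r$. You take $r$ slightly below $1/2$ in order to force the covering pieces to have diameter strictly less than $1/2$, and then carry a limiting/slack argument through the numerics. The paper instead sets $r=1/2$ directly and uses the fact that $-\tfrac12 K_0$ is an \emph{open} convex set: in an open convex body no pair of points attains the diameter, so any two points of a translate of $-\tfrac12 K_0$ are already at distance strictly less than $\tfrac12\diam(K)\le \tfrac12$. This removes the need for your $r<1/2$ workaround and reduces the numerical check to the clean inequality $3^d(d\ln d+d\ln\ln d+5d)<16^d/2$, which the paper verifies for $d\ge 2$ (the case $d=1$ is handled trivially by hand, which also sidesteps the undefined $\ln\ln 1$ term you would encounter).
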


\begin{proof}
    We apply Theorem~\ref{thm:RZ} with $K=\conv(P)$ and $r=1/2$. Note that $\diam(K)= \diam(P) \le 1$,
    and thus the distance between any two points of $-(1/2)K_0$ is \textit{strictly less} than $1/2$.
    It remains only to note that the number of translates given by Theorem~\ref{thm:RZ} is smaller
    than $16^d/2$ for all $d \ge 2$. 
\end{proof}

Now we construct the desired order of $P$ with large maximum indegree of the corresponding ordered
\Term\ by the following algorithm, which is a higher-dimensional extension of the procedure we used
in Section~\ref{sec:line}.  

\medskip
\noindent {\bf Algorithm~$\Order(P)$}

\begin{enumerate} \itemsep 2pt

\item[] Step 1: Compute a diameter pair $ab$, where we may assume that $|ab|=1$.

\item[] Step 2: Let $A=\{ p \in P \colon |pa| \leq |pb| \}$, and
$B=\{ p \in P \colon |pb| \leq |pa| \}$. Assume w.l.o.g. that $|A| \geq |B|$, thus $|A| \geq n/2$.

\item[] Step 3: By Corollary~\ref{cor:borsuk}, partition $A$ into at most $16^d/2$ subsets where
  each subset has diameter less than $1/2$. One of these subsets  $C \subseteq A$ contains at least
  $n/16^d$ points.  

\item[] Step 4: List $P$ in the following order: 
\[ \cent(C), b, \Delete(\Order(C),\cent(C)), \AnyOrder\big(P \setminus (C \cup\{b\})\big). \]

\item[] Step 5: $\cent(P) \gets \cent(C)$

\end{enumerate}

\paragraph{Analysis.}
$\Order(P)$ is a list obtained by a recursive call. Observe that in $\Order(P)$, the element
$\cent(P)$ is listed first, as required. As earlier, let $\Delta^{-}(P)$ denote its indegree. By
construction, the indegree of $\cent(C)$ is increased by $1$ due to the directed edge $b \to
\cent(C)$ that is added when $b$ is revealed to the algorithm as the second point. Moreover, for
each subsequent step that reveals an element $c \in C$, since $\diam(C) <1/2$ and $|cb| \geq 1/2$,
$C$ is processed in a manner independent of the presence of $b$. As a result, 
\[ \Delta^{-}(P) \geq \Delta^{-}(C) +1. \]

We now show that $\Delta^{-}(P) \geq \log_{16^d}{n} =\log{n}/(4d)$.
This inequality is clearly satisfied for $n=1,2$.
By induction, it is verified that 
\[ \Delta^{-}(P) \geq \Delta^{-}(C) +1 \geq \log_{16^d}{(n/16^d)} + 1 = \log{n}/(4d) -1 +1 =
\log{n}/(4d). ~~~~~~ \qed \]

\section{Abstract metric spaces: proof of Theorem~\ref{thm:metric2}} \label{sec:metric}

Let $\rho$ be a metric on a finite set $V= \{v_1,\dots,v_n\}$, namely a positive and symmetric
function $\rho: V\times V \to \RR_+$ satisfying the triangle inequality. So far we labeled the
points of $V$ arbitrarily just for definiteness, the desired order on them will be constructed
later. We assume familiarity with hypergraphs, otherwise readers are referred to~\cite{HF21}
for related definitions.

We define a $3$-coloring of a complete $3$-uniform hypergraph $K_n^{(3)}$ on the vertex set $[n]$ as
follows: for all $1\le i_1 < i_2 < i_3 \le n$, we 
        color the triple $\{i_1,i_2,i_3\}$ by red (resp. green or
        blue) whenever $v_{i_2}v_{i_3}$ (resp. $v_{i_1}v_{i_3}$ or
        $v_{i_1}v_{i_2}$) is the `shortest side' of a triangle
        $v_{i_1}v_{i_2}v_{i_3}$, that is $\rho(v_{i_2},v_{i_3})< \rho(v_{i_1},v_{i_2})$
        and $\rho(v_{i_2},v_{i_3})< \rho(v_{i_1},v_{i_3})$.
        Recall that our points are in general position, meaning that no
        point triple determines an isosceles triangle, and so each triple gets exactly one color. A
        subhypergraph of $K_n^{(3)}$ is \emph{monochromatic} if all its triples are of the same
        color. A \textit{forward star} $S_k^{(3)}$ is a 3-uniform hypergraph on an ordered vertex
        set $\{i_1, \ldots , i_k\} \subseteq [n]$, labeled in the ascending order, consisting of edges
        $\{i_1,i_j,i_{j'}\}$ for all 
    $i_{j}<i_{j'}$. We claim that if there exists a red clique $K_k^{(3)}$, or a green
    forward star $S_k^{(3)}$, or a blue forward star $S_k^{(3)}$, then there exists
    an order such that the corresponding \Term\ has maximum indegree at least $k-1$. 
    To verify this claim, suppose that such a structure exists on the vertex set 
    $I=\{i_1, \ldots , i_k\}$, labeled in the ascending order, and consider the following
    three cases. 
	
	First, suppose that these vertices form a red clique. We claim that under the order
	\[v_{i_k},v_{i_1},v_{i_2},\dots,v_{i_{k-1}}, \AnyOrder(V\setminus I),\]
	the indegree of $v_{i_k}$ is at least $k-1$. Indeed, note that
        as each new vertex $v_{i_j}$ is revealed,
        $\rho(v_{i_j},v_{i_k})$ is the shortest distance among all
        currently visible points. Hence, $v_{i_j}\to v_{i_k}$ is an
        edge of the ordered \Term\ for all $1 \le j \le k-1$. 
	
	Similarly, if these vertices form a blue forward star, then the indegree of $v_{i_1}$ under the order
	\[v_{i_1},v_{i_k},v_{i_{k-1}},\dots,v_{i_2}, \AnyOrder(V\setminus I),\]
	is at least $k-1$. Indeed, when $v_{i_j}$ is revealed, the
        distance $\rho(v_{i_1},v_{i_j})$ will be the shortest among
        all current points as $\{i_1,i_j,i_\ell\}$ is always blue. 
	
	Finally, if these vertices form a green forward star, then the indegree of $v_{i_1}$ under the order
	\[v_{i_1},v_{i_2},v_{i_3},\dots,v_{i_k}, \AnyOrder(V\setminus I),\]
	is at least $k-1$. Indeed, when $v_{i_j}$ is revealed, an
        arbitrary currently visible point $v_{i_\ell}$ satisfies
        $\{i_1,i_\ell,i_j\}$ being green. So the distance
        $\rho(v_{i_1},v_{i_j})$ will be the shortest. 
	
	The last ingredient of the proof is the following Ramsey-type result due to He and Fox; more
        precisely, their Theorem 1.2 and Lemma 5.2 in ~\cite{HF21}. 
    We rewrite this proof to make it explicit that the argument in \cite{HF21} produces forward
    stars and also extends to more than two colors. 
  
    \begin{theorem}\label{thm:ramsey}
    Let $K_n^{(3)}$ be a complete 3-uniform hypergraph on an ordered
    vertex set $[n]$, and its edges be colored by either red,
    green, or blue. If $K_n^{(3)}$ contains neither a red clique
    $K_k^{(3)}$, nor a green forward star $S_k^{(3)}$, nor a blue forward star
    $S_k^{(3)}$, then $n \leq \exp\left(O(k^2\log(k))\right)$. 
    \end{theorem}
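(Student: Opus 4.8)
The plan is to follow the standard "stepping-up / iterated pigeonhole" strategy for hypergraph Ramsey numbers, adapted so that in the green and blue color classes we only need to control a forward star rather than a full clique. First I would build an auxiliary \emph{coloring of pairs}: process the vertices $1,2,\dots,n$ in order, maintaining a growing "candidate set" of vertices that are pairwise consistent in a strong sense, and record, for each newly added vertex, how it interacts with the current candidate set. Concretely, I would try to extract a large vertex subset $W\subseteq[n]$ together with, for each ordered pair $i<j$ in $W$, a "type" in a bounded set, such that the type of $\{i,j\}$ determines the color of $\{i,j,\ell\}$ for \emph{all} $\ell\in W$ with $\ell>j$ (this is where forward stars, rather than cliques, are exactly what the argument naturally yields on the green/blue side). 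The number of types should be polynomial in $k$, so that a classical Erd\H os--Rado / Ramsey argument on the $2$-colored (or few-colored) pair structure on $W$ gives the claimed bound $n\le\exp(O(k^2\log k))$.

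The key steps, in order, would be: (1) Define the type of a pair $\{i,j\}$ with $i<j$ by looking "downward" — i.e. at how the triples $\{i,j,\ell\}$ with $\ell<i$ were colored, or equivalently encode, via a bounded-depth recursion, the color pattern that vertex $j$ induces together with $i$ on a carefully chosen reservoir of earlier vertices. (2) Run the stepping-up recursion: assuming $n$ is large, use pigeonhole at each stage to pass to a subset on which all these types stabilize, losing only a factor that is polynomial in $k$ at each of $O(k)$ stages, hence an $\exp(O(k^2\log k))$ loss in total. (3) On the resulting "homogeneous-in-type" set $W$, observe that a monochromatic red structure must be a genuine clique (all triples red regardless of order), whereas a monochromatic green or blue structure only forces the \emph{forward} triples $\{i_1,i_j,i_{j'}\}$ to share a color, which is precisely a forward star $S_k^{(3)}$ — so if $|W|\ge k$ we are done, giving the contrapositive of the theorem. (4) Bookkeep the constants to confirm the exponent is $O(k^2\log k)$ and not worse.

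I would lean on the fact that this is essentially Theorem~1.2 together with Lemma~5.2 of He and Fox~\cite{HF21}; the only genuinely new content is (a) checking that their two-color argument goes through verbatim with three colors (the red class behaving like "one color", the green and blue classes each behaving like the "other color" of their argument), and (b) making explicit that the monochromatic substructure their proof produces in the non-red colors is a forward star rather than a clique. Both of these are structural observations about their proof rather than new estimates, so I would present the argument by recalling their construction and then remarking on these two points, rather than re-deriving the Ramsey bound from scratch.

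The main obstacle I anticipate is purely expository: isolating exactly the right notion of "type" of a pair so that (i) the number of types is polynomial in $k$ (to keep the exponent at $k^2\log k$ and not, say, $2^{k}$), (ii) the recursion that stabilizes types loses only $O(k)$ factors, and (iii) the forward-star (rather than clique) conclusion in the green/blue classes falls out cleanly. Getting the dependence of the number of types on $k$ to be polynomial — rather than exponential, which would blow the bound up to a tower — is the delicate quantitative point, and it is exactly the content of Lemma~5.2 in~\cite{HF21}; I would make sure the adaptation to three colors does not secretly multiply the number of types by a $k$-dependent-but-still-polynomial factor in a way that changes the exponent's \emph{order}, which it does not.
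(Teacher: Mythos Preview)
Your high-level plan---reproduce the He--Fox argument, verify that it goes through with three colors in place of two, and observe that the monochromatic substructure produced in the non-red colors is a \emph{forward} star---is exactly what the paper does. The paper carries this out explicitly as a greedy process that builds an auxiliary $2$-graph $G$: vertices of $[n]$ are processed in increasing order; each new vertex $v$ receives edges to previously-processed \emph{red} vertices one by one, and each such edge $\{u,v\}$ is colored according to the majority color of $\{u,v,w\}$ among the \emph{remaining} (larger-index) vertices $w$, after which the non-majority $w$'s are discarded; $v$ is colored by the color of the first non-red edge it sees. The counting then shows $G$ has fewer than $k$ red vertices, fewer than $(k-1)^2$ green and $(k-1)^2$ blue vertices, fewer than $2(k-1)^2$ green/blue edges, and fewer than $k(k-1)^2/2$ red edges, from which the bound follows.

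Two places where your sketch of the mechanism is off and would cause trouble if you filled in details from your own description rather than from~\cite{HF21} directly: (i) the ``type'' of a pair $\{i,j\}$ is fixed by looking \emph{forward} at the majority color of $\{i,j,w\}$ over the surviving $w>j$ (and then trimming), not backward at $\ell<i$ as you wrote in Step~(1); the forward direction is precisely why the green/blue output is a forward star. (ii) Your arithmetic ``polynomial-in-$k$ loss at each of $O(k)$ stages $\Rightarrow \exp(O(k^2\log k))$'' does not add up; the actual accounting has $O(k^2)$ green/blue edge-creations each costing a factor $k$ (this is the $\exp(O(k^2\log k))$ term) and $O(k^3)$ red edge-creations each costing only a factor $(1-2/k)^{-1}$ (this contributes $\exp(O(k^2))$). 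If you run the process with ``$O(k)$ stages'' in mind you will either undercount edges or overstate the bound.
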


	Applying this theorem with $k=c'\sqrt{\log(n)/\log\log(n)}$
        for a sufficiently small $c'>0$, we ensure the existence of a
        monochromatic special structure, and thus the existence of an
        order such that the corresponding ordered \Term\ has maximum
        indegree at least $k-1$, which completes the proof of Theorem~\ref{thm:metric2}. 

\begin{proof}[Proof of Theorem~\ref{thm:ramsey}]
  We consider a process of picking and deleting vertices from $V(K_n^{(3)})$,
  and constructing an auxiliary graph $G$.
  We keep track of two sets $U$, the vertex set of $G$, and $W$, the set of
    vertices waiting to be picked. At the beginning of the process, we have
    $U=\emptyset$ and $W=V(K_n^{(3)}) = [n]$. In each iteration, we pick the smallest
    vertex $v \in W$, and delete it from $W$. Then, following a rule
    we shall describe, we create edges from $v$ to vertices in $U$,
    assign $v$ a color (red/green/blue), and add it into $U$. During
    this step, immediately after we created an edge $\{u,v\}$, we look
    at every vertex $w\in W$: if at least $|W|/k$ vertices $w$ satisfy that
    $\{u,v,w\}$ is green, then we color the edge $\{u,v\}$ green
    (and vertex $v$ green, see further below),
    and delete all $w \in W$ with $\{u,v,w\}$ not green; otherwise, 
    we perform the same checking, coloring, and deleting with 
    ``blue'' in place of ``green''; if neither of these cases happens, 
    we color the edge $\{u,v\}$ by red, and delete all $w\in W$ such that 
    $\{u,v,w\}$ is not red.

    Now we describe how we create edges: once a new vertex $v$ is picked, we keep creating
    edges from $v$ to the red vertices in $U$ following the order
    until we encounter a green or blue edge; if the last edge we
    created is green (resp. blue), we color the vertex green
    (resp. blue); otherwise we color this vertex red. The new vertex
    and the created edges are considered as added into $G$.
    By construction, every edge in $G$ leads to a red vertex
    and each green (resp. blue) vertex is adjacent to exactly one green (resp. blue) edge.
    The next iteration begins as long as there are still vertices left in $W$.
  
    If there are $k$ red vertices in $G$, then by construction all the 2-edges between them are red.
    And again by the way we color the edges in $G$, these vertices correspond to a
    3-uniform red clique $K_k^{(3)}$ inside $K_n^{(3)}$. Hence, without loss of
    generality, we may assume that there are fewer than $k$ red vertices in $G$.
    Given this assumption, if there are $(k-1)^2$ green vertices
    in $G$, then by our construction and the pigeonhole principle,
    there must be one red vertex $v_1$ and $k-1$ green vertices
    $v_2,\dots,v_{k}$ such that $\{v_1,v_i\}$ is green for $2\leq
    i\leq k$. Then by the way we color the edges in $G$, these vertices
    correspond to a 3-uniform green forward star $S_k^{(3)}$ inside
    $K_n^{(3)}$. Hence, without loss of generality, there are fewer than
    $(k-1)^2$ green vertices in $G$, and a similar claim holds for
    blue vertices as well. Therefore, $G$ has fewer than $k+2(k-1)^2$
    vertices in total. 

    Next, we upper bound the number of edges in $G$. Since green edges only originate from green vertices,
    there are fewer than $(k-1)^2$ green edges. Similarly, there are fewer than $(k-1)^2$ blue edges. There
    are at most $(k-1)^2$ red edges between red vertices. To count
    other red edges, notice that the $i$-th red vertex has fewer than
    $k-1$ green (resp. blue) edges adjacent to it, and each endpoint
    of these edges is adjacent to $i-1$ red edges. In summary, the
    number of red edges in $G$ is fewer than
    
    \begin{equation*} 
        (k-1)^2 + \sum_{i=1}^{k-1}(i-1)(k-1) = k(k-1)^2/2.
    \end{equation*}

    Finally, notice that each vertex in $G$ decreases the size of $W$
    by 1, each green or blue edge preserves at least a $1/k$-fraction of the elements of $W$
    and each red edge preserves at least a $(1-2/k)$-fraction of the elements of $W$.
    Since by the end of our construction of $G$ when $W$ is depleted, $G$ has fewer
    than $k+2(k-1)^2$ vertices, fewer than $2(k-1)^2$ green or blue
    edges, and fewer than $k(k-1)^2/2$ red edges, the size of $W$ in
    the beginning, namely $n$, is at most

    \begin{equation*} 
        (k+2(k-1)^2) (1/k)^{-2(k-1)^2} (1-2/k)^{-k(k-1)^2/2} \leq \exp\left(O(k^2\log(k))\right).
    \end{equation*}
    Here, we used an elementary estimate that $(1-2/k)^{-k/2} < e^2$. 
\end{proof}

\section{Concluding remarks} \label{sec:remarks}

Recall from the second half of Theorem~\ref{thm:line} that  there exists a set of $n$ points on the
line such that for every order, the indegree of each point in the corresponding \Term\ is at most
$\lceil\log{n}\rceil$. Unlike the linear case, we do not know if this construction is close to being
tight for larger dimensions or abstract metric spaces.

Note that both in the definition of \Term\ corresponding to a metric space and in our proof of
Theorem~\ref{thm:metric2}, we didn't really use the triangle inequality, and thus we could argue in
a more general setting of \textit{semimetric spaces}. Moreover, the \textit{exact} distances between
the points are also not important, since only their \textit{relative order} determines the
\Term. Furthermore,  we know from~\cite{AM22} 
that any order of the distances on an $n$-element set can be realized in the $(n-1)$-dimensional
Euclidean space. Hence, a sufficiently strong improvement upon the result of
Theorem~\ref{thm:d-space} could strengthen the lower bound from Theorem~\ref{thm:metric2} as well. 

Observe that our proof of Theorem~\ref{thm:metric2} works without any changes if instead of a red
clique, we could guarantee only a red `backward star', which is a much sparser hypergraph.  
(A \textit{backward star} is a 3-uniform hypergraph similar to a forward star, where the only
difference is that the common vertex of all the hyperedges has not the smallest, but the largest
index.) We are unaware of any variant of Theorem~\ref{thm:ramsey} that guarantees a backward star  
in one color or a forward star in (one of) the other color(s) for a much smaller $n$. Note that the
proof of Theorem~\ref{thm:ramsey} works for \textit{any} coloring of the complete hypergraph, while
colorings that come from metric spaces satisfy additional constraints due to
transitivity\footnote{For instance, in the notation of Section~4, it is not possible that two
  triples $\{1,2,3\}$ and $\{1,3,4\}$ are blue, while $\{1,2,4\}$ is green, because otherwise we
  would have $v_1v_2 < v_1v_3 < v_1v_4 < v_1v_2$, a contradiction.}. 

A closely related problem is the following:

\begin{problem} \label{Prob1}
	For an $n$-element metric space $V$ and $v \in V$, let $d(v)$ be the maximum indegree of $v$
        in the ordered \Term\ over all $n!$ possible orders. Can $\sum_v 2^{-d(v)}$ be larger
        than~$1$? 
\end{problem}

On the one hand, note that if the sum $\sum_v 2^{-d(v)}$ is at most one, then $d(v)\ge
\lceil\log{n}\rceil$ for at least one $v\in V$, which would strongly improve upon the result of
Theorem~\ref{thm:metric2}. On the other hand, if $\sum_v 2^{-d(v)} > 1$ for some $V$, then all the
points of an appropriate `iterated blow-up' $Q$ of $V$ satisfy $d(q) < c\log {|Q|}$ for some $c<1$
and all $q \in Q$, and this would improve the aforementioned upper bound from the second half of
Theorem~\ref{thm:line}. 

We managed to verify the inequality $\sum_v 2^{-d(v)} \le 1$ for all $n \le 5$ by computer
search. Moreover, it is not hard to check that this sum equals exactly $1$ for the metric spaces
constructed in the second half of the proof of Theorem~\ref{thm:line}, and that these are not the
only examples. However, computer experiments suggest that such metric spaces become rare as $n$
grows. This can be tentatively considered as evidence towards a negative resolution of
Problem~\ref{Prob1}.

\paragraph{Acknowledgments.}
The authors thank the organizers of the Focused Week on Geometric Spanners (Oct 23 -- Oct 29, 2023)
at the Erd\H os Center, Budapest, where this joint work began. Research partially
supported by ERC Advanced Grant `GeoScape' No. 882971 and by the Erd\H os Center and by the Ministry of Innovation and Technology NRDI Office within the framework of the Artificial Intelligence National Laboratory (RRF-2.3.1-21-2022-00004). Research was also
partially supported by the Overseas Research Fellowship of IIIT-Delhi, India.

\end{document}